\documentclass{amsproc}
\usepackage{amsmath,amsfonts,textcomp,longtable,pstricks}
\usepackage[dvips]{graphicx}



\newtheorem{thm}{Theorem }[section]

\newtheorem{lemma}[thm]{Lemma }

\newtheorem{prop}[thm]{Proposition }

\theoremstyle{definition}
\newtheorem{deff}[thm]{Definition }

\numberwithin{equation}{section}

\def\RR{{\mathbb R}}

\def\QQ{{\mathbb Q}}
\def\ZZ{{\mathbb Z}}

\def\fq{{\mathbb{F}_q}}
\def\kk{{\bar{k}}}

\def \bra#1\ket {\mathop{\vphantom{#1}\left<\smash{#1}\right>}\nolimits}

 \DeclareMathOperator{\im}{Im}
 
\DeclareMathOperator{\End}{End}

 \DeclareMathOperator{\rk}{rk}
 \DeclareMathOperator{\diag}{diag}

 \DeclareMathOperator{\Np}{Np}
\DeclareMathOperator{\Hp}{Hp} 
 \DeclareMathOperator{\ord}{ord}

\def\plim{\mathop{{\lim\limits_{\longleftarrow}}}\nolimits}

\renewcommand \phi {\varphi}
\renewcommand \rho {\varrho}

\begin{document}
\author{Sergey Rybakov}
\thanks{Supported in part by RFBR grants no. 11-01-00393-a, 11-01-12072-ofi-m and 10-01-93110-CNRSLa}
\address{Poncelet laboratory (UMI 2615 of CNRS and Independent University of
Moscow)}
\address{Institute for information transmission problems of the Russian Academy of Sciences}
\address{Laboratory of Algebraic Geometry, GU-HSE, 7 Vavilova Str., Moscow, Russia, 117312}

\email{rybakov@mccme.ru, rybakov.sergey@gmail.com}%

\title[The groups of points on abelian surfaces]
{The groups of points on abelian surfaces\\ over finite fields}
\date{}
\keywords{abelian variety, the group of rational points, finite
field, Newton polygon, Hodge polygon}

\subjclass{14K99, 14G05, 14G15}

\begin{abstract}
Let $A$ be an abelian surface over a finite field $k$. The
$k$-isogeny class of $A$ is uniquely determined by a Weil
polynomial $f_A$ of degree $4$. We give a classification of the
groups of $k$-rational points on varieties from this class in
terms of $f_A$.
\end{abstract}

\maketitle
\section{Introduction}
Classical results of Tate and Honda give us an explicit
description for the set of isogeny classes of abelian varieties
over a finite field $k$. Namely, an isogeny class is uniquely
determined by a so called Weil polynomial of any variety from
this class. It looks natural to consider classification problems
concerning abelian varieties inside a given isogeny class. In this
paper we classify groups of $k$-rational points on abelian
surfaces. Xing partly classified groups of points on supersingular 
surfaces in~\cite{Xi1} and in~\cite{Xi2}. In the paper~\cite{Ry1} we show 
that one could use the language of Hodge polygons to describe groups of points. Moreover, we 
classify this groups for abelian varieties with commutative endomorphism algebra. 
An abelian variety has commutative endomorphism algebra if and only if its Weil polynomial has no multiple roots.
We could say that this is the general case. From the classification of Weil polynomials of abelian surfaces due to
R\"uck, Xing, Maisner and Nart~\cite{MN} we get that there are only three cases more. Namely,
\begin{itemize}
	\item the Weil polynomial is a square of a polynomial of degree $2$ without multiple roots;
	\item the Weil polynomial is of the form $P(t)(t\pm\sqrt{q})^2$, where $P(t)$ has no multiple roots, and $\sqrt{q}\in\ZZ$;
	\item the Weil polynomial equals $(t\pm\sqrt{q})^4$, and $\sqrt{q}\in\ZZ$.
\end{itemize}
 
The paper is devoted to a classification of groups of points for these three cases.

The author is grateful to referee for providing useful corrections on the paper.

\section{Main result}
Throughout this paper $k$ is a finite field $\fq$ of
characteristic $p$. Let $A$ be an abelian variety of dimension $g$
over $k$, and let $\kk$ be an algebraic closure of $k$. Fix a
prime number $\ell$. For a natural number $m$ denote by $A_m$ the
kernel of multiplication by $\ell^m$ in $A(\kk)$. Let $T_\ell(A) =
\plim A_m$ be the Tate module, and
$V_\ell(A)=T_\ell(A)\otimes_{\ZZ_\ell}\QQ_\ell$ be the
corresponding vector space over $\QQ_\ell$. If $\ell\ne p$, then
$T_\ell(A)$ is a free $\ZZ_\ell$-module of rank $2g$. The
Frobenius endomorphism $F$ of $A$ acts on the Tate module by a
semisimple linear operator, which we also denote by $F:
T_\ell(A)\to T_\ell(A)$. The characteristic polynomial
$$
f_A(t) = \det(t-F|T_\ell(A))
$$
is called a {\it Weil polynomial of $A$}. It is a monic polynomial
of degree $2g$ with rational integer coefficients independent of
the choice of prime $\ell\ne p$. It is well known that for
isogenous varieties $A$ and $B$ we have $f_A(t)=f_B(t)$. Moreover,
Tate proved that the isogeny class of abelian variety is
determined by its characteristic polynomial, that is
$f_A(t)=f_B(t)$ implies that $A$ is isogenous to $B$ (see
\cite{WM}). If $\ell=p$, then $T_p(A)$ is called a {\it physical
Tate module}. In this case, $f_A(t)=f_1(t)f_2(t)$, where
$f_1,f_2\in\ZZ_p[t]$, and $f_1(t)=\det(t-F|T_p(A))$. Moreover
$d=\deg f_1\leq g$, and $f_2(t)\equiv t^{2g-d}\mod p$
(see~\cite{De}).

Recall some definitions and results from~\cite{Ry1}. For an
abelian group $G$ we denote by $G_\ell$ the $\ell$-primary
component of $G$. The group $A(k)$ is a kernel of $1-F: A\to A$, and the $\ell$-component
$$A(k)_\ell\cong T_\ell(A)/(1-F)T_\ell(A)$$ (see~\cite[Proposition 3.1]{Ry1}). The proof of
the following lemma is essentially the proof of~\cite[Theorem~1.1]{Ry1}.

\begin{lemma}
\label{lem_on_Tate_module} 
Let $A$ be an abelian variety over $k$, and let $G$ be a finite abelian group of order $f_A(1)$. Suppose that for any prime number $\ell$ dividing order of $G$ we have an $F$-invariant sublattice $T_\ell\subset T_\ell(A)$ such that $G_\ell\cong T_\ell/(1-F)T_\ell$. Then there exists an abelian variety $B$ over $k$, and an isogeny $\phi:B\to A$ such that $T_\ell(\phi)$ induces an isomorphism $T_\ell(B)\cong T_\ell$ for any $\ell$. In particular, $B(k)\cong G$. 
\end{lemma}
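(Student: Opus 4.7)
The plan is to build $B$ as a quotient $A/K$ of $A$ by a suitable finite $F$-stable subgroup scheme $K\subset A$, and to obtain $\phi\colon B\to A$ by factoring multiplication by an appropriate integer.

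For each prime $\ell\mid |G|$, choose $n_\ell$ large enough that $\ell^{n_\ell}T_\ell(A)\subset T_\ell$. Under the canonical identification $\ell^{-n_\ell}T_\ell(A)/T_\ell(A)\cong A[\ell^{n_\ell}](\kk)$ (for $\ell=p$, using only the \'etale component recorded by the physical Tate module), the image of $\ell^{-n_\ell}T_\ell$ gives an $F$-stable subgroup $K_\ell\subset A[\ell^{n_\ell}](\kk)$, which therefore descends to a finite subgroup scheme of $A$ over $k$. Set $K=\bigoplus_{\ell\mid |G|}K_\ell$ and $N=\prod_{\ell\mid|G|}\ell^{n_\ell}$, so $K\subset A[N]$. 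Let $\pi\colon A\to B:=A/K$ be the quotient isogeny, and let $\phi\colon B\to A$ be the unique isogeny with $\phi\circ\pi=[N]_A$.

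Next I verify that $T_\ell(\phi)$ identifies $T_\ell(B)$ with $T_\ell$ for $\ell\mid |G|$ and with $T_\ell(A)$ otherwise. Passing to rational Tate modules, the relation $V_\ell(\phi)\circ V_\ell(\pi)=N\cdot\id$ gives $T_\ell(\phi)(T_\ell(B))=N\cdot V_\ell(\pi)^{-1}(T_\ell(B))$. A short diagram chase identifies $V_\ell(\pi)^{-1}(T_\ell(B))\subset V_\ell(A)$ with the preimage of $K_\ell\subset V_\ell(A)/T_\ell(A)=A[\ell^\infty]$, which by construction is exactly $\ell^{-n_\ell}T_\ell$. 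Since $N/\ell^{n_\ell}\in\ZZ_\ell^\times$, multiplication by $N$ sends this lattice onto $T_\ell$. For $\ell\nmid|G|$ one has $K_\ell=0$ and $N\in\ZZ_\ell^\times$, so $T_\ell(\phi)$ is an isomorphism onto $T_\ell(A)$.

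Applying \cite[Proposition~3.1]{Ry1} to $B$ then yields $B(k)_\ell\cong T_\ell(B)/(1-F)T_\ell(B)\cong T_\ell/(1-F)T_\ell\cong G_\ell$ for every $\ell\mid|G|$, while the equality $|B(k)|=f_B(1)=f_A(1)=|G|$ forces $B(k)_\ell=0$ for $\ell\nmid|G|$. Thus $B(k)\cong G$. The main technical point is the correct identification of $F$-stable subgroups of $T_\ell(A)/\ell^{n_\ell}T_\ell(A)$ with $k$-subgroup schemes of $A$ at the prime $\ell=p$, where one must work with the \'etale part of the $p$-divisible group; this is handled exactly as in the proof of \cite[Theorem~1.1]{Ry1}.
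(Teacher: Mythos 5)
Your proof is correct and is essentially the standard Waterhouse-style argument that the paper defers to via \cite[Theorem~1.1]{Ry1}: realize the target lattices $T_\ell$ as arising from a finite $F$-stable (hence $k$-rational) subgroup scheme $K\subset A$, pass to the quotient $B=A/K$, and obtain $\phi$ from the complementary isogeny to $[N]$. You correctly handle the only delicate point, namely that at $\ell=p$ the physical Tate module only sees the \'etale part of $A[p^\infty]$, so $K_p$ must be interpreted as an \'etale subgroup scheme (using that the connected--\'etale sequence splits over the perfect field $k$).
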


We need a modification of this statement. Suppose we are looking for an abelian variety $B$ such that $B(k)\cong G$, and $A$ and $B$ are isogenous. It is enough to find free $\ZZ_\ell$ module $T_\ell$ for any $\ell$ with semisimple action $F:T_\ell\to T_\ell$ such that $f_A(t)=\det(t-F)$, and $G_\ell\cong T_\ell/(1-F)T_\ell$. Indeed, since the Frobenius action on the vector space $V_\ell$ is semisimple, it is determined up to isomorphism by the Weil polynomial $f_A$. We can reconstruct this action as follows. Put $V_\ell'=T_\ell\otimes_{\ZZ_\ell}\QQ_\ell$, and choose any inclusion $T_\ell(A)\to V_\ell'$ such that the image of $T_\ell(A)$ contains $T_\ell$. Clearly, this inclusion induces an isomorphism of $F$-vector spaces $V_\ell\to V_\ell'$. 

Let $Q(t)=\sum_i Q_i t^i$ be a polynomial of degree $d$ over
$\QQ_\ell$, and let $Q(0)=Q_0\neq 0$. Take the lower convex hull
of the points $(i,\ord_\ell(Q_i))$ for $0\leq i\leq d$ in $\RR^2$.
The boundary of this region without vertical lines is called {\it
the Newton polygon $\Np_\ell(Q)$ of $Q$}. Its vertices have
integer coefficients, and $(0,\ord_\ell(Q_0))$ and $(d,\ord_\ell(Q_d))$ are its endpoints. One can associate to $\Np(Q)$ the set of its slopes, and each slope has a multiplicity. The Newton polygon of $Q$ is uniquely determined by this data. Suppose we have two polynomials $Q_1$ and $Q_2$. Then the slope set for $\Np(Q_1Q_2)$ is the union of slope sets for $\Np(Q_1)$ and $\Np(Q_2)$, and multiplicities of slopes of $\Np(Q_1Q_2)$ are sums of multiplicities of corresponding slopes of $\Np(Q_1)$ and $\Np(Q_2)$.

We associate to $A(k)_\ell$ a polygon of special type.

\begin{deff}Let $0\leq m_1\leq m_2\leq\dots\leq m_r$ be nonnegative
integers, and let $H=\oplus_{i=1}^r\ZZ/\ell^{m_i}\ZZ$ be an
abelian group of order $\ell^m$. The {\it Hodge polygon
$\Hp_\ell(H,r)$ of a group $H$} is the convex polygon with
vertices $(i,\sum_{j=1}^{r-i}m_j)$ for $0\leq i\leq r$. It has
$(0,m)$ and $(r,0)$ as its endpoints, and its slopes are
$-m_r,\dots, -m_1$. We write $\Hp(H)=(m_1,\dots,m_r)$.
\end{deff}

Note that some of the numbers $m_i$ could be zero, in other words, the Hodge polygon is allowed to have zero slopes. The isomorphism class of $H$ depends only on $\Hp_\ell(H,r)$.
When we work with groups of points on abelian surfaces we often
write $\Hp_\ell(H)=\Hp_\ell(H,4)$. We need the following well-known result (see~\cite[4.3.8]{Ke2008} or \cite[8.40]{BO}).

\begin{thm}\label{rp}
Let $E$ be an injective endomorphism of a free $\ZZ_\ell$-module $T$
of finite rank. Let $f(t)=\det(E-t)$ be its characteristic
polynomial. Then $\Np_\ell(f)$ lies on or above the Hodge polygon
of $T/ET$, and these polygons have same endpoints.
\end{thm}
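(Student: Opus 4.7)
The plan is to use Smith normal form to pin down both polygons in a single basis, then bound the $\ell$-adic valuations of the coefficients of $f(t)$ directly. First, the elementary divisor theorem over $\ZZ_\ell$ furnishes $\ZZ_\ell$-bases $\{e_1,\dots,e_r\}$ and $\{f_1,\dots,f_r\}$ of $T$ together with integers $0\leq m_1\leq m_2\leq\cdots\leq m_r$ such that $Ee_i=\ell^{m_i}f_i$. Then $T/ET\cong\bigoplus_i\ZZ/\ell^{m_i}\ZZ$, so the Hodge polygon of $T/ET$ has vertices $(k,m_1+\cdots+m_{r-k})$; in particular its endpoints are $(0,m_1+\cdots+m_r)$ and $(r,0)$.

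Next, I express the coefficients of $f(t)$ via principal minors. Writing $f(t)=\sum_{i=0}^r c_i t^i$, the standard expansion of the characteristic polynomial gives
$$c_i=(-1)^i\,\tr(\wedge^{r-i}E)=(-1)^i\sum_{|J|=r-i}\det E_{JJ},$$
where $E_{JJ}$ denotes the $(r-i)\times(r-i)$ principal submatrix of the matrix of $E$ in the single basis $\{e_1,\dots,e_r\}$. In that basis, the $j$-th column of $E$ is $\ell^{m_j}$ times an integral vector (the coordinates of $f_j$), so each $\det E_{JJ}$ is divisible by $\prod_{j\in J}\ell^{m_j}$. Since the $m_j$ are nondecreasing, the minimum of $\sum_{j\in J}m_j$ over $J$ with $|J|=r-i$ equals $m_1+\cdots+m_{r-i}$, and the ultrametric inequality yields
$$\ord_\ell(c_i)\ \geq\ m_1+m_2+\cdots+m_{r-i}\qquad(0\leq i\leq r).$$

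Finally, these inequalities say that every point $(i,\ord_\ell(c_i))$ lies on or above the Hodge polygon; since the Hodge polygon is convex and $\Np_\ell(f)$ is by definition the lower convex hull of these points, $\Np_\ell(f)$ also lies on or above the Hodge polygon. The endpoint assertion is immediate: $c_r=(-1)^r$ is a unit, giving the right endpoint $(r,0)$, and $c_0=\det E$ has $\ell$-adic valuation exactly $m_1+\cdots+m_r$ by Smith normal form, matching the left endpoint $(0,\sum m_j)$. No serious obstacle arises; the one point to watch is that the principal-minor formula must be taken in a \emph{single} basis so that ``trace'' is meaningful, but the column-by-column divisibility of $E$ in the basis $\{e_j\}$ is all that the argument requires.
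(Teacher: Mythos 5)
Your proof is correct. Note that the paper does not supply its own argument for this theorem; it cites it as well-known from Kedlaya \cite[4.3.8]{Ke2008} and Berthelot--Ogus \cite[8.40]{BO}. The route you take --- Smith normal form over $\ZZ_\ell$ to exhibit the invariant factors $\ell^{m_1},\dots,\ell^{m_r}$, then the expansion $c_i=(-1)^i\operatorname{tr}(\wedge^{r-i}E)=(-1)^i\sum_{|J|=r-i}\det E_{JJ}$ together with column-divisibility and the ultrametric inequality to get $\ord_\ell(c_i)\geq m_1+\cdots+m_{r-i}$ --- is exactly the standard proof of the Newton-above-Hodge inequality that those references present. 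Two small details worth keeping in view: (i) the divisibility of $\det E_{JJ}$ by $\prod_{j\in J}\ell^{m_j}$ uses multilinearity in columns, so it is important (as you noted) that the matrix is written in the single basis $\{e_j\}$; and (ii) passing from ``each point $(i,\ord_\ell(c_i))$ lies on or above the Hodge polygon'' to the same statement for $\Np_\ell(f)$ uses convexity of the Hodge polygon, which holds because the slopes $-m_r\leq\cdots\leq-m_1$ are nondecreasing. Both are handled correctly. The endpoint computation ($c_r=\pm1$ and $\ord_\ell(\det E)=\sum m_j$ since the change-of-basis determinant is a unit) is also fine.
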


Weil polynomials of abelian surfaces were classified by R\"uck,
Xing, Maisner and Nart~\cite{MN}. We use a simplified version of
this classification. Namely, we use that for the Weil polynomial
only four cases of the main theorem below occur.

\begin{thm}\label{2dim}
Let $A$ be an abelian surface over a finite field with Weil
polynomial $f_A$. Let $G$ be an abelian group of order $f_A(1)$.
Then $G$ is a group of points on some variety in the isogeny class
of $A$ if and only if for any prime number $\ell$
\begin{enumerate}
\item if $f_A$ has no multiple roots, then
$\Np_\ell(f_A(1-t))$ lies on or above $\Hp_\ell(G_\ell)$;
\item if $f_A=P_A^2$, and $P_A$ has no
multiple roots, then $G_\ell\cong G_\ell^{(1)}\oplus G_\ell^{(2)}$, where $G_\ell^{(1)}$ and
$G_\ell^{(2)}$ are $\ell$-primary abelian groups with one or two generators
such that $\Np_\ell(P_A(1-t))$ lies on or above $\Hp_\ell(G_\ell^{(1)},2)$
and $\Hp_\ell(G_\ell^{(2)},2)$.
\item Suppose $f_A=(t^2-bt+q)(t\pm\sqrt{q})^2$, and $\sqrt{q}\in\ZZ$, where $f(t)=t^2-bt+q$ has no multiple
roots; let
\begin{itemize}
\item $\Hp_\ell(G_\ell)=(m_1,m_2,m_3,m_4)$;
\item $m=\ord_\ell(f(1))$;
\item $m_q=\ord_\ell(1\pm\sqrt{q})$;
\item $m_b=m_1+m_3-m_q$.
\end{itemize}
Then
\begin{enumerate}
\item $0\leq m_b\leq\ord_\ell(b-2)$;
\item $\min(m_b,m_q)\geq m_1$;
\item $\min(m-m_b,m_q)\geq m_2$.
\end{enumerate}
\item Finally, if $f_A(t)=(t\pm\sqrt{q})^4$, and $\sqrt{q}\in\ZZ$, then
$G\cong(\ZZ/(1\pm\sqrt{q})\ZZ)^4$.
\end{enumerate}
\end{thm}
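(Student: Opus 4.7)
The plan is a case-by-case analysis. Case~(1) is exactly Theorem~1.1 of~\cite{Ry1}, so I simply cite it. For each of the other three cases I work one prime $\ell$ at a time and use the modified form of Lemma~\ref{lem_on_Tate_module}: producing $B$ in the isogeny class of $A$ with $B(k)\cong G$ is equivalent, for every $\ell$, to producing a free $\ZZ_\ell$-module $T_\ell$ with semisimple Frobenius of characteristic polynomial $f_A$ and $T_\ell/(1-F)T_\ell\cong G_\ell$.

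Case~(4) is immediate: semisimplicity of $F$ together with $f_A=(t\pm\sqrt{q})^4$ force $F=\mp\sqrt{q}\cdot\id$ on every $V_\ell$, so $1-F$ is the scalar $1\pm\sqrt{q}$ and any $F$-stable rank-$4$ lattice gives quotient $(\ZZ_\ell/(1\pm\sqrt{q}))^4$. For Case~(3), the basic tool is the semisimple decomposition $V_\ell=V'\oplus V''$, with $F=\mp\sqrt{q}$ on the $2$-dimensional subspace $V''$ corresponding to the factor $(t\pm\sqrt{q})^2$. For sufficiency, given $(m_1,\dots,m_4)$ satisfying (a)--(c), I set $m_b:=m_1+m_3-m_q$ (an elementary check gives $0\le m_b\le m/2$, so combined with (a) this is the Newton-above-Hodge bound for an elliptic-curve Hodge polygon $(m_b,m-m_b)$) and build $T_\ell=T'_\ell\oplus T''_\ell$: any rank-$2$ $T''_\ell\subset V''$ automatically has $(1-F)$-quotient $(\ZZ_\ell/(1\pm\sqrt{q}))^2$, and Case~(1) applied to the elliptic curve with Weil polynomial $f(t)=t^2-bt+q$ supplies an $F$-stable $T'_\ell$ whose quotient has Hodge polygon $(m_b,m-m_b)$; sorting the four slopes recovers $(m_1,\dots,m_4)$. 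For necessity, I use the exact sequence $0\to T_\ell(A)\cap V''\to T_\ell(A)\to\bar T'\to 0$ of $F$-stable lattices (where $\bar T'$ is the projection to $V'$, and both end terms are rank~$2$). Since $1-F$ is injective, the snake lemma yields $0\to H_1\to G_\ell\to H_2\to 0$ with $H_1\cong(\ZZ_\ell/(1\pm\sqrt{q}))^2$ and $H_2$ of $\ZZ_\ell$-rank $\le 2$ whose Hodge polygon lies under $\Np_\ell(f(1-t))$ by Theorem~\ref{rp}. A direct combinatorial analysis of the Hodge polygons attainable by such extensions then shows that they are characterised by (a)--(c), with $m_b=m_1+m_3-m_q$ recovered uniformly from $G_\ell$ and $m_q$.

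Case~(2) is the technical heart. Here $K_\ell:=\QQ_\ell[F]\cong\QQ_\ell[t]/(P_A)$ is a semisimple $\QQ_\ell$-algebra (since $P_A$ is separable), and $V_\ell$ is a free $K_\ell$-module of rank~$2$. For sufficiency, given a decomposition $G_\ell=G_\ell^{(1)}\oplus G_\ell^{(2)}$ satisfying the theorem, I take $T_\ell=L^{(1)}\oplus L^{(2)}$ with each $L^{(i)}$ an $F$-stable rank-$2$ lattice of characteristic polynomial $P_A$, using Case~(1) for elliptic curves to arrange $L^{(i)}/(1-F)L^{(i)}\cong G_\ell^{(i)}$, and invoke Lemma~\ref{lem_on_Tate_module}. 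For necessity, I view $T_\ell(A)$ as a module of $\OO$-rank~$2$ over the order $\OO:=\ZZ_\ell[F]\subset K_\ell$; extracting a saturated rank-$1$ $\OO$-submodule $L^{(1)}\subset T_\ell(A)$ with rank-$1$ quotient $L^{(2)}$ and applying the snake lemma for $1-F$ yields an exact sequence $0\to G_\ell^{(1)}\to G_\ell\to G_\ell^{(2)}\to 0$ in which each term has $\ZZ_\ell$-rank $\le 2$ and satisfies the Newton-above-Hodge inequality by Theorem~\ref{rp}.

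The hardest step is the final splitting of this extension in Case~(2): when $\OO$ is non-maximal (as happens when the two roots of $P_A$ are $\ell$-adically congruent) the sequence of $\OO$-modules need not split, so \emph{a priori} $G_\ell$ is only an extension of $G_\ell^{(2)}$ by $G_\ell^{(1)}$, and one must establish an isomorphism $G_\ell\cong G_\ell^{(1)}\oplus G_\ell^{(2)}$ as abelian groups. I would resolve this by exploiting the freedom granted by the modified form of Lemma~\ref{lem_on_Tate_module} to replace $T_\ell(A)$ by a sublattice that does split as a direct sum of two rank-$1$ $\OO$-modules without altering the isomorphism class of the $(1-F)$-quotient; equivalently, by analysing the extension class in terms of the structure of $\OO/(1-F)\OO$, one shows that each admissible extension is isomorphic as an abelian group to the split direct sum.
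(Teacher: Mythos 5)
Your Cases (1) and (4) match the paper. Cases (2) and (3) have real problems.

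\emph{Case (3), sufficiency is wrong as stated.} You propose $T_\ell=T'_\ell\oplus T''_\ell$ and claim that ``sorting the four slopes recovers $(m_1,\dots,m_4)$.'' It does not: the split construction always yields $G_\ell\cong\ZZ/\ell^{m_b}\oplus\ZZ/\ell^{m-m_b}\oplus(\ZZ/\ell^{m_q})^2$, and the multiset $\{m_b,m-m_b,m_q,m_q\}$ is in general not $\{m_1,m_2,m_3,m_4\}$. Concretely, take $m_q=2$, $\ord_\ell(b-2)\ge 2$, and $(m_1,m_2,m_3,m_4)=(1,1,3,3)$, so $m=4$ and $m_b=2$. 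Conditions (a)--(c) hold, but your construction produces $(\ZZ/\ell^2)^4$, not $(\ZZ/\ell)^2\oplus(\ZZ/\ell^3)^2$. This is precisely why the paper's sufficiency proof builds a \emph{non-split} lattice, with $u_1=v_1+v_3$ and $u_3=v_2+v_4$ genuinely mixing the two Frobenius-eigenblocks; the cross-terms $\ell^{m_b}u_3$ and $f(1)\ell^{-m_b}u_1$ in the action of $1-F$ are what let the invariant factors differ from the split case. You would need to redo this part from scratch.

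\emph{Case (3), necessity.} The snake-lemma sequence $0\to(\ZZ/\ell^{m_q})^2\to G_\ell\to H_2\to 0$ with $\Hp(H_2)$ under $\Np_\ell(f(1-t))$ gives $(1)$ and $(3)$ cheaply, but the upper bound in $(a)$, namely $m_1+m_3\le m_q+\ord_\ell(b-2)$, does \emph{not} obviously drop out of ``a direct combinatorial analysis of the Hodge polygons attainable by such extensions.'' If it did, it would be a nontrivial Hall-polynomial/Littlewood--Richardson statement constraining which $G_\ell$ admit a subgroup $(\ZZ/\ell^{m_q})^2$ with the given quotient type, and you neither state nor prove it. The paper instead proves $(a)$ by exhibiting a vector $v_3'$ whose image $(F-1)Ev_3'$ is divisible by $\ell^{m_1+m_3}$ but not by $\ell\alpha(b-2)$, using the concrete Frobenius relation on $T_\ell(A)$ rather than just the group extension. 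Your outline cannot be said to contain this step.

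\emph{Case (2), necessity.} Your sufficiency direction is fine and matches the paper. For necessity you propose extracting a saturated rank-$1$ $\OO$-submodule and using the snake lemma; you correctly flag that when $\OO=\ZZ_\ell[F]$ is non-maximal the resulting extension $0\to G_\ell^{(1)}\to G_\ell\to G_\ell^{(2)}\to 0$ need not split, and then you wave at a fix (``replace $T_\ell(A)$ by a sublattice that does split\ldots'') without an argument. The paper avoids this entirely: it proves the exact identity $m_1+m_4=m=\ord_\ell(P_A(1))$ via a duality for matrix factorizations (the pair $(X,Y)$ versus $(Y,X)$), which forces the canonical decomposition $G_\ell\cong(\ZZ/\ell^{m_1}\oplus\ZZ/\ell^{m_4})\oplus(\ZZ/\ell^{m_2}\oplus\ZZ/\ell^{m_3})$ to work, together with the separate Lemma~\ref{lemma} giving $m_2\le\ord_\ell(b-2)$. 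This duality is the actual content, and it is absent from your proposal.

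In short: Case~(3) sufficiency is incorrect, and Cases~(2) and~(3) necessity each replace the paper's key technical step (matrix-factorization duality, resp.\ the explicit $v_3'$ argument) with an unproven assertion.
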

\begin{proof}
Case $(1)$ follows from~\cite{Ry1}. Let us prove that
conditions of the case $(2)$ are necessary. First, suppose that a
prime $\ell$ divides $f_A(1)$, but $P_A(1-t)\not\equiv
t^2\mod\ell.$ It is equivalent to say that
$P_A(1-t)=(t-x_1)(t-x_2)$, where $x_1,x_2\in\ZZ_\ell$, and
$\ell$ divides $x_1$, but not divides $x_2$. Thus
$T_\ell(A)$ is a direct sum of two modules $T_1$ and $T_2$ of
$\ZZ_\ell$-rank two such that $1-F$ acts on $T_i$ as
multiplication by $x_i$. It follows that $$G_\ell\cong
T_1/(1-F)T_1\cong(\ZZ_\ell/x_1\ZZ_\ell)^2, $$ and $G_\ell^{(1)}\cong G_\ell^{(2)}\cong\ZZ_\ell/x_1\ZZ_\ell$.
The rest follows from Theorem~\ref{case2} below. To prove that the conditions of $(2)$ are sufficient
we have to find an abelian variety $B$ with a given group of
points $G$. By Lemma~\ref{lem_on_Tate_module} it is enough to
construct a Tate module $T_\ell(B)$ for any prime $\ell$ dividing
$f_A(1)$. By \cite[Theorem 3.2]{Ry1} there exist $F$-invariant
$\ZZ_\ell$-modules $T'_1$ and $T'_2$ such that $T'_i/(1-F)T'_i\cong
G_\ell^{(i)}$, and $T'_i\otimes_{\ZZ_\ell}\QQ_\ell\cong\QQ_\ell[t]/P_A(t)\QQ_\ell[t]$
for $i\in\{1,2\}$. Now we let $T_\ell(B)\cong T'_1\oplus T'_2$.

We prove the case $(3)$ in the last section. The case $(4)$ is
obvious since $F$ acts as multiplication by $\mp\sqrt{q}$.
\end{proof}

\section{Matrix factorizations}
In this section we finish the proof of case (2). We assume that $\ell$ divides $P_A(1)$ and that
$$P_A(1-t)\equiv t^2\mod\ell.$$ In this case $\ell\neq p$. Indeed, let $P_A(t)=t^2-bt+c$, where $c=\pm q$.
If $\ell=p$ satisfies our assumptions, we get that $2-b$ and $1-b$ are both divisible by $p$.

In~\cite{Ry2} we show that the Tate module $T_\ell(A)$ corresponds to a matrix factorization. 
Fix a pair of polynomials $f, f_1\in \ZZ_\ell[t]$ and a
positive integer $r$. Put $R=\ZZ_\ell[t]/f_1\ZZ_\ell[t]$. Denote by $x\in R$ the
image of $t$ under the natural projection from $\ZZ_\ell[t]$.

\begin{deff}
A {matrix factorization} $(X,Y)$ is a pair of $r\times r$ matrices
with coefficients in $\ZZ_\ell[t]$ such that $\det
X=f$ and $YX=f_1\cdot I_r$, where $I_r$ is the identity matrix.
\end{deff}

Suppose we are given a matrix factorization $(X,Y)$.
The matrix $X$ defines a map of free $\ZZ_\ell[t]$ modules:
$$\ZZ_\ell[t]^r \xrightarrow{X} \ZZ_\ell[t]^r.$$ 
Its cokernel $T$ is annihilated by $f_1$. It is equivalent to say that $T$ is an $R$-module.
We see that the matrix factorization $(X,Y)$ corresponds to a finitely generated
$R$-module $T$ given by the presentation:
\begin{equation}\label{eq_mf}
\ZZ_\ell[t]^r \xrightarrow{X} \ZZ_\ell[t]^r \to T \to 0.
\end{equation}

\begin{prop}\cite[Proposition 5.2]{Ry2}\label{prop_mf1}
Suppose $f_1\equiv t^{d_1}\mod\ell$, and $\deg f_1=d_1$. The
module $T$ is free of finite rank over $\ZZ_\ell$, and
characteristic polynomial of the action of $x$ on $T$ is equal to
$f$.
\end{prop}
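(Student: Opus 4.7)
The plan is to exploit the identity $YX = f_1 I_r$ to translate every question about $T$ into $\ZZ_\ell$-linear algebra. Since $f_1 \equiv t^{d_1}\bmod\ell$ and $\deg f_1=d_1$, the leading coefficient of $f_1$ is a unit of $\ZZ_\ell$; after absorbing it into $Y$ I may assume $f_1$ is monic, so $R$ is a free $\ZZ_\ell$-module of rank $d_1$ with basis $1,x,\dots,x^{d_1-1}$. The equation $YX=f_1 I_r$ shows that $f_1$ annihilates $T$, so the presentation~\eqref{eq_mf} factors through $R^r\to T$; in particular $T$ is finitely generated over $\ZZ_\ell$.

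The crux will be to show that $T$ is $\ZZ_\ell$-torsion-free, since combined with finite generation over the PID $\ZZ_\ell$ this yields freeness. Suppose $\ell v=Xu$ for some $u,v\in\ZZ_\ell[t]^r$. Applying $Y$ yields $f_1u=\ell Yv$, so $f_1u\in\ell\ZZ_\ell[t]^r$. Reducing modulo $\ell$ gives $t^{d_1}\bar u=0$ in $\FF_\ell[t]^r$; since $\FF_\ell[t]$ is an integral domain, $\bar u=0$, so $u=\ell u'$ for some $u'\in\ZZ_\ell[t]^r$. Then $v=Xu'$ already lies in the image of $X$, and so $v$ vanishes in $T$. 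This is the main obstacle, and the only place where the hypothesis $f_1\equiv t^{d_1}\bmod\ell$ is used in an essential way---without it $T$ could have $\ell$-torsion.

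For the characteristic polynomial, I would tensor the short exact sequence $0\to\ZZ_\ell[t]^r\xrightarrow{X}\ZZ_\ell[t]^r\to T\to 0$ with $\QQ_\ell$; left-exactness follows from $X$ being injective, since $\det Y\cdot\det X=f_1^r\ne 0$ forces $f=\det X\ne 0$. Over the PID $\QQ_\ell[t]$, Smith normal form of $X$ gives $V:=T\otimes_{\ZZ_\ell}\QQ_\ell\cong\bigoplus_i\QQ_\ell[t]/(d_i)$ with $\prod_i d_i=\det X=f$ (the scalar ambiguity from the units in the Smith decomposition is pinned down by comparing leading coefficients, which are units in $\ZZ_\ell$ by the same argument as for $f_1$). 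The characteristic polynomial of $t$ acting on $V$ is therefore $\prod_i d_i=f$, which coincides with that of $x$ on the $\ZZ_\ell$-lattice $T$.
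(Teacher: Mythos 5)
The paper does not prove this proposition; it is cited from~\cite[Proposition 5.2]{Ry2}, so there is no in-paper argument to compare against, and I can only judge your proof on its own terms. It is correct, and the structure is clean: annihilation by $f_1$ gives finite generation over $\ZZ_\ell$, $\ell$-torsion-freeness gives freeness, and Smith normal form over $\QQ_\ell[t]$ recovers the characteristic polynomial. The torsion-freeness step is the heart of it: from $\ell v=Xu$ you apply $Y$ to get $f_1u=\ell Yv$, reduce mod $\ell$ to get $t^{d_1}\bar u=0$ in $\FF_\ell[t]^r$, conclude $u\in\ell\ZZ_\ell[t]^r$ since $\FF_\ell[t]$ is a domain, and cancel $\ell$; this is exactly where the hypothesis $f_1\equiv t^{d_1}\bmod\ell$ earns its keep, as you say.

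Two minor points worth tightening. First, to show that $f_1$ annihilates $T=\coker X$ you need $XY=f_1I_r$, not $YX=f_1I_r$: the former gives $f_1v=X(Yv)\in X\ZZ_\ell[t]^r$ for all $v$. Of course $XY=f_1I_r$ follows from $YX=f_1I_r$ (over the fraction field $\QQ_\ell(t)$ the relation $\det Y\cdot\det X=f_1^r\neq0$ makes $X$ invertible, so $Y=f_1X^{-1}$ and hence also $XY=f_1I_r$), but the step should be said. Second, for the characteristic polynomial to equal $f$ on the nose rather than a unit multiple, one must know that $f=\det X$ is monic. Your parenthetical about leading coefficients is aimed at this, but as written it conflates the Smith decomposition (which lives over $\QQ_\ell[t]$, where every nonzero scalar is a unit) with an integrality statement over $\ZZ_\ell$; the relevant fact is simply that in all uses in this paper $f$ is a monic polynomial, so the monic normal form of $\prod_i d_i$ coincides with $\det X$.
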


The following proposition shows that modules over $R$ give rise to
matrix factorizations.

\begin{prop}\label{prop_mf2}
Let $T$ be an $R$-module which is free of finite rank over $\ZZ_\ell$.
Suppose that $T$ can be generated over $R$ by $r$ elements, and
that $\Hp(T/xT)=(m_1,\dots,m_r)$. Then there exists a matrix
factorization $(X,Y)$ such that $T$ has presentation
${\rm(\ref{eq_mf})}$, and $$X\equiv
\diag(\ell^{m_1},\dots,\ell^{m_r})\mod t\ZZ_\ell[t].$$
\end{prop}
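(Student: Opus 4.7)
The plan is to build $X$ column-by-column from explicit relations among a well-chosen set of generators of $T$, and then use Nakayama's lemma to verify that the columns so constructed generate the full module of relations.

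First I would exploit the local structure: since $f_1\equiv t^{d_1}\mod\ell$ is distinguished, $R=\ZZ_\ell[t]/(f_1)\cong\ZZ_\ell[[t]]/(f_1)$ is a complete local ring with maximal ideal $\mathfrak m=(\ell,x)$, so $x$ lies in its Jacobson radical and Nakayama applies. Fixing a decomposition $T/xT\cong\bigoplus_{i=1}^{r}\ZZ_\ell/\ell^{m_i}\ZZ_\ell$, I would lift a generator of each cyclic summand to $e_i\in T$; since these generate $T/\mathfrak mT$, they generate $T$ as an $R$-module, yielding a surjection $\phi\colon\ZZ_\ell[t]^r\twoheadrightarrow T$, $e_i^*\mapsto e_i$. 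For each $i$, the equality $\ell^{m_i}\bar e_i=0$ in $T/xT$ gives $\ell^{m_i}e_i\in xT$, so $\ell^{m_i}e_i=x\sum_j p_{ij}(x)e_j$ for some $p_{ij}\in\ZZ_\ell[t]$, producing the relation vector
\[
v_i := \ell^{m_i}e_i^* - t\sum_j p_{ij}(t)\,e_j^* \in\ker\phi,\qquad v_i\equiv\ell^{m_i}e_i^*\mod t.
\]
Taking $X$ to be the matrix with columns $v_1,\ldots,v_r$ gives $X\equiv\diag(\ell^{m_1},\ldots,\ell^{m_r})\mod t$ by construction.

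The hard part is to show that the $v_i$ already generate $\ker\phi$ as a $\ZZ_\ell[t]$-module, equivalently that their reductions $\bar v_i$ generate the $R$-module $K_R:=\ker(R^r\to T)$. I would apply Nakayama on the local ring $R$, reducing to showing that the classes $\bar v_i\bmod x$ generate $K_R/xK_R$. The action of $x$ on $T$ is injective (because $T$ is $\ZZ_\ell$-free and the characteristic polynomial of $x$ on $T$ has nonzero constant term), so tensoring the short exact sequence $0\to K_R\to R^r\to T\to 0$ with $R/xR$ yields an injection $K_R/xK_R\hookrightarrow(R/xR)^r$ whose image is $\ker((R/xR)^r\to T/xT)$; by construction, the images of the $\bar v_i$ span precisely this kernel.

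Once $\operatorname{image}(X)=\ker\phi$ is established, the matrix $Y$ is obtained by expressing each column of $f_1I_r$, which lies in $\ker\phi=\operatorname{image}(X)$ because $f_1$ annihilates $T$, as a $\ZZ_\ell[t]$-combination of the columns of $X$; this gives $XY=f_1I_r$. The symmetric identity $YX=f_1I_r$ follows by working over $\QQ_\ell[t]$, where $X$ is invertible (its determinant is $\equiv\prod\ell^{m_i}\neq 0\mod t$, hence nonzero), so $Y=f_1X^{-1}$ and $YX=f_1I_r$. The characteristic polynomial condition $\det X=f$ is then automatic by Proposition~\ref{prop_mf1} applied to the resulting matrix factorization.
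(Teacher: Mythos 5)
There is a genuine gap in the step ``\ldots the $v_i$ already generate $\ker\phi$ as a $\ZZ_\ell[t]$-module, \emph{equivalently} that their reductions $\bar v_i$ generate the $R$-module $K_R$.'' These two statements are not equivalent. From the exact sequence
$$0\to f_1\,\ZZ_\ell[t]^r\to\ker\phi\to K_R\to 0,$$
knowing that the $\bar v_i$ generate $K_R$ only gives $\langle v_1,\dots,v_r\rangle_{\ZZ_\ell[t]}+f_1\ZZ_\ell[t]^r=\ker\phi$; to get $\langle v_1,\dots,v_r\rangle=\ker\phi$ you additionally need $f_1\ZZ_\ell[t]^r\subset\langle v_1,\dots,v_r\rangle$, and your argument never establishes this. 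Equivalently, setting $M=\coker(X)$, your Nakayama computation shows $M/f_1M\cong T$, which is automatic from the surjection $M\twoheadrightarrow T$, not the desired $M\cong T$ (i.e.\ $f_1M=0$). The conclusion is genuinely false for generic choices of the $p_{ij}$: take $r=1$, $f_1(t)=t-\ell$, $R\cong\ZZ_\ell$ with $x=\ell$, $T=\ZZ_\ell$, so $m_1=1$. Any $p_{11}(t)=1+q(t)(t-\ell)$ satisfies $\ell e_1=x\,p_{11}(x)e_1$, giving $v_1=-(t-\ell)(1+tq(t))$, whence $\langle v_1\rangle=(t-\ell)(1+tq(t))\ZZ_\ell[t]\subsetneq(t-\ell)\ZZ_\ell[t]=\ker\phi$ whenever $q\neq 0$; yet $\bar v_1=0$ trivially generates $K_R=0$. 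So the Nakayama step over $R$ cannot detect whether your chosen relations cut out all of $\ker\phi$, and the rest of the proof (the construction of $Y$, the determinant computation) depends on that.

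The paper avoids this issue entirely by invoking \cite[Proposition 5.3]{Ry2} to produce \emph{some} matrix factorization $(X_1,Y_1)$ presenting $T$, and then only adjusts its reduction modulo $t$: Smith normal form over $\ZZ_\ell$ gives $M_1,M_2\in\mathrm{GL}_r(\ZZ_\ell)$ with $M_1\bar X_1 M_2=\diag(\ell^{m_1},\dots,\ell^{m_r})$, and one sets $X=M_1X_1M_2$, $Y=M_2^{-1}Y_1M_1^{-1}$. The hard existence statement is outsourced; only a base change by constant invertible matrices is needed to normalize the constant term of $X$. If you want a self-contained construction along your lines, you would need to argue that one can \emph{choose} the lifts $p_{ij}$ so that $f_1\ZZ_\ell[t]^r\subset\langle v_i\rangle$ --- which amounts to reproving something like \cite[Proposition 5.3]{Ry2}, and requires a different mechanism than Nakayama over $R$ (for instance, working over the complete local ring $\ZZ_\ell[[t]]$, where $\ker\phi$ is free of rank $r$ by Auslander--Buchsbaum, and checking that the $v_i$ give a basis modulo $(\ell,t)$, which again does not follow from the mod-$x$ computation alone).
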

\begin{proof}
By \cite[Proposition 5.3]{Ry2} there exists a matrix factorization
$(X_1,Y_1)$ such that $T$ has presentation
\begin{equation}\label{eq_mf2}
\ZZ_\ell[t]^r \xrightarrow{X_1} \ZZ_\ell[t]^r \to T \to 0.
\end{equation}
Take the cokernel of the multiplication by $t$ of the
presentation${\rm(\ref{eq_mf2})}$; we get
$$\ZZ_\ell^r \xrightarrow{\bar X_1} \ZZ_\ell^r \to T/xT\to 0.$$

There exist matrices $M_1$ and $M_2$ over $\ZZ_\ell$ such that $\det M_1=\det M_2=1$
and $\bar X=M_1\bar X_1M_2$ is a diagonal matrix. Since $$\ZZ_\ell^r/\bar
X\ZZ_\ell^r\cong T/xT,$$ we get that $\bar
X=\diag(\ell^{m_1},\dots,\ell^{m_r})$. Now take $X=M_1X_1M_2$ and
$Y=M_2^{-1}Y_1M_1^{-1}$.
\end{proof}

\begin{thm}\label{case2}
Assume that $f_A=P_A^2$, and $P_A(1-t)\equiv t^2\mod\ell$. Suppose
that $P_A$ has no multiple roots. Then $A(k)_\ell\cong G_\ell^{(1)}\oplus
G_\ell^{(2)}$, where $G_\ell^{(1)}$ and $G_\ell^{(2)}$ are $\ell$-primary abelian groups with
one or two generators such that $\Np_\ell(P_A(1-t))$ lies on or
above $\Hp_\ell(G_\ell^{(1)},2)$ and $\Hp_\ell(G_\ell^{(2)},2)$.
\end{thm}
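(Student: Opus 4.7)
The plan is to view $T_\ell(A)$ as a torsion-free module over the local order $R=\ZZ_\ell[t]/f_1(t)$, where $f_1(t)=P_A(1-t)$ and $x=1-F$, then to decompose it as an $R$-direct sum of two rank-one pieces, and finally to apply Theorem~\ref{rp} to each piece. Because $P_A$ has no multiple roots and the Frobenius acts semisimply on $V_\ell(A)$, the minimal polynomial of $F$ is $P_A$, so $f_1(x)=0$ on $T_\ell(A)$. Hence $T_\ell(A)$ is a finitely generated torsion-free $R$-module whose generic fibre $V_\ell(A)$ is free of rank two over $R\otimes_{\ZZ_\ell}\QQ_\ell$. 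The ring $R$ is local with maximal ideal $\mathfrak{m}=(\ell,x)$ and residue field $\FF_\ell$, reduced (since $f_1$ is squarefree), one-dimensional Gorenstein (as a complete intersection in $\ZZ_\ell[t]$), and of multiplicity two since $R/\ell R\cong\FF_\ell[t]/t^2$.

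The main step is to produce an $R$-decomposition $T_\ell(A)=T_1\oplus T_2$ with each $T_i$ of $R\otimes\QQ_\ell$-rank one, hence $\ZZ_\ell$-free of rank two. In the spirit of this section I would invoke Proposition~\ref{prop_mf2} to present $T_\ell(A)$ by a matrix factorization $(X,Y)$ of $f_1^2$ by $f_1$ of size $r\times r$, where $r$ is the minimal number of $R$-generators of $T_\ell(A)$, and then block-diagonalize $X$ into $\diag(X_1,X_2)$ with $\det X_i=f_1$. The key input is the relation $XY=YX=f_1 I_r$, obtained by inverting $X$ over $\QQ_\ell(t)$, which allows a careful reduction by elementary row and column operations over $\ZZ_\ell[t]$. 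Alternatively one can deduce the splitting from Bass's classical structure theorem: $R$ is a Bass ring because every ideal of $R$ sits inside $R\cong\ZZ_\ell^{\oplus 2}$ as a $\ZZ_\ell$-submodule, hence is generated by at most two elements as a $\ZZ_\ell$-module and a fortiori as an $R$-module, so every finitely generated torsion-free $R$-module is a direct sum of fractional $R$-ideals.

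Given the decomposition, on each $T_i$ the endomorphism $x$ is an injective $\ZZ_\ell$-linear map on a rank-two free module, and multiplication by $x$ on $T_i\otimes\QQ_\ell\cong R\otimes\QQ_\ell$ has characteristic polynomial $f_1(t)=P_A(1-t)$. Theorem~\ref{rp} applied to $x\colon T_i\to T_i$ then yields that $\Np_\ell(P_A(1-t))$ lies on or above $\Hp_\ell(T_i/xT_i,2)$. Setting $G_\ell^{(i)}=T_i/xT_i$ produces $A(k)_\ell=G_\ell^{(1)}\oplus G_\ell^{(2)}$, and each $G_\ell^{(i)}$ has at most two $\ZZ_\ell$-generators as a quotient of the two-generated $\ZZ_\ell$-module $T_i$. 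The hard part is the block-diagonalization of the matrix factorization; Bass's structure theorem provides a clean alternative at the cost of external machinery.
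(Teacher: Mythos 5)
Your argument is correct, and it takes a genuinely different route from the paper's. The paper never decomposes the Tate module; instead it proves two numerical relations directly. It shows $m_1+m_4=m:=\ord_\ell P_A(1)$ by a clever duality in the matrix-factorization language: starting from $(X,Y)$ with $X\equiv\diag(\ell^{m_i})\bmod t$ it passes to the swapped factorization $(Y,X)$, which by Lemma~\ref{lem_on_Tate_module} realizes a second abelian surface $B$ with $\Hp(T_\ell(B)/(1-F)T_\ell(B))=(m-m_4,\dots,m-m_1)$, and the inequality $m\geq m_1+m_4$ applied to $B$ then yields the reverse inequality $m\leq m_1+m_4$. Separately, Lemma~\ref{lemma} gives $m_2\le\ord_\ell(b-2)$ by tensoring with the splitting ring of $P_A$ and applying the snake lemma. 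Together these force the decomposition $A(k)_\ell\cong(\ZZ/\ell^{m_1}\oplus\ZZ/\ell^{m_4})\oplus(\ZZ/\ell^{m_2}\oplus\ZZ/\ell^{m_3})$, which incidentally tells you \emph{exactly} how the invariant factors pair up. Your Bass-ring argument instead splits $T_\ell(A)$ itself as an $R$-module ($R=\ZZ_\ell[t]/(P_A(1-t))$ local, reduced, one-dimensional, with every ideal two-generated because $R\cong\ZZ_\ell^2$ as a $\ZZ_\ell$-module) into two fractional ideals, and then Theorem~\ref{rp} applied to $x=1-F$ on each rank-two summand gives the conclusion immediately. Your proof is shorter and more conceptual, at the price of invoking Bass's structure theorem for torsion-free modules over orders of multiplicity two, which is outside the paper's otherwise self-contained toolkit; the paper's version also gives the sharper statement about which $m_i$ pair with which, which yours does not. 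One caution: your first proposed route, block-diagonalizing the matrix factorization $(X,Y)$ by ``careful elementary row and column operations over $\ZZ_\ell[t]$,'' is not actually elementary and is essentially equivalent to the module splitting one is trying to prove — you should drop it and commit to the Bass argument, which is the one that actually carries the proof.
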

\begin{proof}
Apply Theorem~\ref{rp} to $T=T_\ell(A)$ and $E=1-F$. The slopes of $\Np(P_A(1-t))$ are the slopes of $\Np(P_A(1-t)^2)$, but multiplicities are doubled. We get that $m_1$ is not greater than the smallest slope of $\Np(P_A(1-t))$, i.e. that $\ell^{m_1}$ divides $b-2$.
It is enough to prove that $m:=\ord_\ell(P_A(1))=m_1+m_4$. Indeed, in this case $\Np_\ell(P_A(1-t))$ lies on or above $\Hp_\ell(G_\ell^{(1)},2)=(m_1,m_4)$. Moreover, $m=m_2+m_3$, and by Lemma~\ref{lemma} below $\Np_\ell(P_A(1-t))$ lies on or above $\Hp_\ell(G_\ell^{(2)},2)=(m_2,m_3)$.

Note that $m_4$ is a minimal number such that
$\frac{\ell^{m_4}}{F-1}\in\End A$, and $m_1$ is a maximal
number such that $\ell^{m_1}$ divides $F-1$. From the equality
$(F-1)^2-(b-2)(F-1)-P_A(1)=0$ we get that
$$\frac{P_A(1)}{(F-1)\ell^{m_1}}=\frac{F-1}{\ell^{m_1}}-
\frac{b-2}{\ell^{m_1}}\in\End A.$$ It follows that $m\geq
m_1+m_4$.

The Frobenius action on the Tate module $T_\ell(A)$ is determined
up to isomorphism by a module structure over the ring $R=\ZZ_\ell[t]/P_A(1-t)\ZZ_\ell[t]$
with $t$ acting as $1-F$. Suppose we have an $R$-module $T$ such
that $\Hp(T/xT)=(m_1,m_2,m_3,m_4)$. By Proposition~\ref{prop_mf2}
there exists a matrix factorization $(X,Y)$ such that $\det
X=f_A(1-t)$ and
$$X\equiv \diag(\ell^{m_1},\ell^{m_2},\ell^{m_3},\ell^{m_4})\mod t\ZZ_\ell[t].$$
The matrix factorization $(Y,X)$ corresponds to a module $T'$ over
$R$, which is generated by $4$ elements and the characteristic
polynomial of $x$ is equal to $\det
Y=P_A^4(1-t)/f_A(1-t)=f_A(1-t)$. Moreover,
$$Y\equiv \diag(\ell^{m-m_1},\ell^{m-m_2},\ell^{m-m_3},\ell^{m-m_4})\mod t\ZZ_\ell[t],$$
i.e. $\Hp(T'/xT')=(m-m_1,m-m_2,m-m_3,m-m_4)$. By
Lemma~\ref{lem_on_Tate_module} there exists an abelian surface $B$
such that $T_\ell(B)\cong T'$. From the first part of the proof
applied to $T_\ell(B)$ it follows that $(m-m_1)+(m-m_4)\leq m$.
Thus $m=m_1+m_4$.
\end{proof}

\begin{lemma}\label{lemma}
$m_2\leq\ord_\ell(b-2)$.
\end{lemma}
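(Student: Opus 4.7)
The plan is to extract the bound directly from the defining equations of a matrix factorization supplied by Proposition~\ref{prop_mf2}. First I would apply that proposition to $T_\ell(A)$ to obtain a $4\times 4$ matrix factorization $(X,Y)$ over $\ZZ_\ell[t]$ with $X\equiv\diag(\ell^{m_1},\ell^{m_2},\ell^{m_3},\ell^{m_4})\mod t\ZZ_\ell[t]$ and $YX=P_A(1-t)I_4$, then expand $X=X_0+tX_1+\cdots$ and $Y=Y_0+tY_1+\cdots$ in $\ZZ_\ell[t]$ and compare low-order coefficients of $YX$ against $P_A(1-t)=t^2+(b-2)t+P_A(1)$. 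Let $u\in\ZZ_\ell^*$ be the unit with $P_A(1)=\ell^m u$.

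From $Y_0X_0=P_A(1)I_4$ and the fact that $X_0=\diag(\ell^{m_i})$, the matrix $Y_0$ is forced to be diagonal with $(Y_0)_{ii}=\ell^{m-m_i}u$; integrality is automatic because the identity $m=m_1+m_4$, already established in the first part of the proof of Theorem~\ref{case2}, gives $m_i\le m$ for every $i$. The coefficient of $t$ in $YX$ reads $Y_0X_1+Y_1X_0=(b-2)I_4$; its $(i,i)$-entry yields
\[
\ell^{m-m_i}u\,(X_1)_{ii}+\ell^{m_i}(Y_1)_{ii}\;=\;b-2.
\]
The decisive observation is that $(X_1)_{ii},(Y_1)_{ii}\in\ZZ_\ell$, so the left-hand side lies in the ideal $(\ell^{m-m_i},\ell^{m_i})=(\ell^{\min(m_i,\,m-m_i)})$. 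Hence $\ord_\ell(b-2)\ge\min(m_i,\,m-m_i)$, which means that for every $i$ either $m_i\le\ord_\ell(b-2)$ or $m_i\ge m-\ord_\ell(b-2)$: the open interval $(\ord_\ell(b-2),\,m-\ord_\ell(b-2))$ is forbidden for any of the $m_i$.

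The last step specializes this gap to $i=2$. The relations $m_2\le m_3$ and $m_2+m_3=m$ (both consequences of $m=m_1+m_4=m_2+m_3$, already known) force $m_2\le m/2$. If $\ord_\ell(b-2)\ge m/2$ the conclusion is immediate; otherwise $m-\ord_\ell(b-2)>m/2\ge m_2$ rules out the second alternative in the gap property and leaves only $m_2\le\ord_\ell(b-2)$.

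The main obstacle is recognizing the source of the bound. Theorem~\ref{rp} applied directly to $T_\ell(A)$ only produces $m_1+m_2\le 2\ord_\ell(b-2)$, and the ``flip'' trick that proved $m=m_1+m_4$ is self-dual on the Hodge polygon and therefore insensitive to the middle slopes individually. What saves the day is that $X$ and $Y$ must have \emph{integral} entries; solving the diagonal $t$-linear equation for $(X_1)_{ii},(Y_1)_{ii}\in\ZZ_\ell$ is a Bezout problem that becomes infeasible precisely when $m_i$ lies strictly between $\ord_\ell(b-2)$ and $m-\ord_\ell(b-2)$, and this is exactly the input the lemma needs.
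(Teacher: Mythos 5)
Your proof is correct, but it takes a genuinely different route from the paper's. The paper proves the lemma by passing to the splitting field $L$ of $P_A$ over $\QQ_\ell$ with ring of integers $S$, writing $P_A(1-t)=(t-a)(t-c')$ with $\ord_\ell(a)\le\ord_\ell(b-2)$, decomposing $T_\ell(A)\otimes_{\ZZ_\ell}S$ via the kernel and image of $E=F+a-1$, and applying the snake lemma to obtain an exact sequence $0\to (S/aS)^2\to A(k)_\ell\otimes S\to G'\to 0$ with $G'$ generated by $2$ (resp.\ $4$) elements; counting generators then gives $m_2\le\lfloor\ord_\ell(a)\rfloor\le\ord_\ell(b-2)$, and this argument is entirely self-contained. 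Your argument instead reuses the matrix factorization $(X,Y)$ of $T_\ell(A)$ that the proof of Theorem~\ref{case2} already constructs via Proposition~\ref{prop_mf2}, and reads the bound off the integrality of the diagonal $t$-linear entries of $YX=P_A(1-t)I_4$. That is sound, and it is not circular: the identity $m=m_1+m_4$ (hence $m=m_2+m_3$) that you invoke is established in the theorem's proof via the $(Y,X)$ flip and the endomorphism relation, without appeal to this lemma. A small simplification is available in your last step: $m_2\le m_3$ and $m_2+m_3=m$ give $m_2\le m-m_2$ directly, so $\min(m_2,m-m_2)=m_2$ and the case split on $\ord_\ell(b-2)\gtrless m/2$ is unnecessary. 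What your route buys is that it stays entirely within the matrix-factorization machinery already in play and avoids base change to $S$; what the paper's route buys is that the lemma remains independent of the $m=m_1+m_4$ computation and could be cited on its own.
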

\begin{proof}
Let $L$ be a splitting field of $P_A$ over $\QQ_\ell$, and let $S$
be its ring of integers. We have $P_A(1-t)=(t-a)(t-c)$, where
$a,c\in S$, and $\ord_\ell(a)\leq\ord_\ell(b-2)$. (Here
$\ord_\ell(a)\in \frac{1}{2}\ZZ$.) Consider the map
$$E=F+a-1:T_\ell(A)\otimes_{\ZZ_\ell} S\to
T_\ell(A)\otimes_{\ZZ_\ell} S.$$ Then $1-F$ acts on $T_1=\ker E$
as multiplication by $a$. We get an exact sequence:
$$0\to T_1\to T_\ell(A)\otimes_{\ZZ_\ell} S \to T_2=\im E\to 0,$$
where $\rk_S T_1=\rk_S T_2=2$. Act by $1-F$ on this sequence. By snake lemma we get an exact
triple of abelian groups:
$$0\to (S/aS)^2\to A(k)_\ell\otimes_{\ZZ_\ell} S\to G'\to 0.$$

If $S=\ZZ_\ell$ then $A(k)_\ell$ contains
$(\ZZ_\ell/a\ZZ_\ell)^2$, and $G'=T_2/(1-F)T_2$ is generated by
$2$ elements. It follows that $m_2\leq\ord_\ell(a)$. If
$S\neq\ZZ_\ell$, then $S\cong\ZZ_\ell^2$ as $\ZZ_\ell$-module and
we multiply everything by two: $A(k)_\ell^2$ contains
$$(S/aS)^2\cong(\ZZ/\lfloor\ord_\ell(a)\rfloor\ZZ)^2\oplus
(\ZZ/\lceil\ord_\ell(a)\rceil\ZZ)^2,$$ and $G'=T_2/(1-F)T_2$ is generated by
$4$ elements. Thus
$m_2\leq\lfloor\ord_\ell(a)\rfloor\leq\ord_\ell(b-2)$.
\end{proof}

\section{Proof of case 3}
The conditions $(b)$ and $(c)$ are equivalent to the following
inequalities:
\begin{enumerate}
\item $m_1\leq m_b$, which is equivalent to $m_3\geq m_q$;
\item $m_1\leq m_q$;
\item $m_2\leq m_q$;
\item $m_2\leq m-m_b$, which is equivalent to $m_4\geq m_q$, since $f_A(1)=m_1+m_2+m_3+m_4=m+2m_q$.
\end{enumerate}
Note that inequalities $m_b\geq 0$ and $(4)$ follow from $(1)$,
and $(2)$ follows from $(3)$. We have to prove $(3)$,
$(1)$ and the second part of $(a)$.


Let $\alpha=1\pm\sqrt{q}$, and let $G_\ell=A(k)_\ell$. Consider
the map $E=F+\alpha-1:T_\ell(A)\to T_\ell(A)$. Then $1-F$ acts on
$T_1=\ker E$ by multiplication by $\alpha$, and $1-F$ acts on
$T_2=\im E$ with characteristic polynomial $f(1-t)$. We
get an exact sequence:
$$0\to T_1\to T_\ell(A)\to T_2\to 0,$$ 
and by the snake lemma
$$0\to (\ZZ/\ell^{m_q}\ZZ)^2\to
G_\ell\to G'\to 0.$$ The group $G_\ell$ contains
$(\ZZ/\ell^{m_q}\ZZ)^2$ only if $m_3\geq m_q$, and $G'$ has two
generators only if $(3)$ holds.

We now prove $(a)$. Since $\Hp_\ell(T_\ell(A)/(F-1)T_\ell(A))=(m_1,m_2,m_3,m_4)$ there exists
a basis $v_1,v_2,v_3,v_4\in T_\ell(A)$ such that $$(F-1)v_1\in\ell^{m_1}T_\ell(A)\text{,
}(F-1)v_2\in\ell^{m_2}T_\ell(A),$$
$$(F-1)v_3\in\ell^{m_3}T_\ell(A)\text{, and
}(F-1)v_4\in\ell^{m_4}T_\ell(A).\eqno (*)$$ The vectors $Ev_4$ and $Ev_3$ generate
a lattice $T_3\subset T_2$ of rank $2$, thus $1-F$ acts on $T_3$ with characteristic polynomial $f(1-t)$. 
By Theorem~\ref{rp} the Newton polygon $\Np_\ell(f(1-t))$ lies on or above $\Hp(T_3/(F-1)T_3,2)$. Thus, we
can find a linear combination $v_3'$ of $v_4$ and $v_3$ such that 
$v_1,v_2,v_3',v_4$ is a basis and $(*)$ holds, but
$(F-1)Ev_3'$ is not divisible by $(b-2)\ell$ in $T_3$.
On the other hand, $F-1=E-\alpha,$ and since $m_3\geq m_q$, it
follows that $Ev_3'$ is not divisible by $\alpha\ell$
in $T_\ell(A)$. We proved that $(F-1)Ev_3'$ is not divisible by
$\alpha (b-2)\ell$ in $T_\ell(A)$. By definition, $(F-1)v_3'$ is
divisible by $\ell^{m_3}$ in $T_\ell(A)$. By the first inequality,
$m_q\geq m_1$; thus for any vector $v\in T_\ell(A)$ the vector
$Ev$ is divisible by $\ell^{m_1}$ in $T_\ell(A)$. We finally get that $(F-1)Ev_3'$ is divisible by
$\ell^{m_1+m_3}$, but not divisible by $\ell\alpha(b-2)$ in $T_\ell(A)$. Thus
$$m_1+m_3<m_q+\ord_\ell(2-b)+1.$$ 

Let us construct an abelian variety with a given group of points.
We give an explicit construction of the Tate module
$T_\ell$ such that $T_\ell/(F-1)T_\ell\cong G_\ell$, and apply Lemma~\ref{lem_on_Tate_module}. 
By assumption, we have an isomorphism of $F$-vector spaces: $$V_\ell(A)\cong\QQ_\ell[t]/P_A(1-t)\oplus(\QQ_\ell[t]/(\alpha-t))^2,$$ where $t$ acts as $1-F$ on the right-hand side.
Thus there exists a basis $v_1,v_2,v_3,v_4$ of $V_\ell(A)$ such that
$$(1-F)v_1=\ell^{m_b}v_2+(b-2)v_1,\quad (1-F)v_2=-\ell^{-m_b}f(1)v_1,$$
$$(1-F)v_3=\alpha v_3,\quad\quad(1-F)v_4=\alpha v_4.$$ 

Let $T_\ell$ be the $\ZZ_\ell$-submodule of $V_\ell(A)$ generated by
$$u_1=v_1+v_3,\quad\quad u_3=v_2+v_4,$$
$$u_2=\frac{(1-F)u_1-(b-2+\alpha)u_1-\ell^{m_b}u_3}{\ell^{m_1}},\text{ and }$$
$$u_4=\frac{(1-F)u_3-\alpha
u_3+\frac{f(1)}{\ell^{m_b}}u_1}{\ell^{m_2}}.$$ 

A straightforward calculation shows that

$$(1-F)u_1=\ell^{m_1}u_2+(b-2+\alpha)u_1+\ell^{m_b}u_3;$$
$$(1-F)u_3=\ell^{m_2}u_4+\alpha u_3-\frac{f(1)}{\ell^{m_b}}u_1;$$
$$(1-F)u_2=\frac{-(b-2)\alpha u_1-\ell^{m_b}\alpha u_3}{\ell^{m_1}};$$
$$(1-F)u_4=\frac{f(1)\alpha u_1}{\ell^{m_2+m_b}}\in\ell^{m_4}T_\ell.$$

By $(a)$ and $(1)$, the vector $(1-F)u_1\in\ell^{m_1}T_\ell$;
by $(3)$ and $(4)$, the vector $(1-F)u_3\in\ell^{m_2}T_\ell$;
by $(a)$ the vector $(1-F)u_2\in\ell^{m_3}T_\ell$;
by definition of $m_b$, the vector $(1-F)u_4\in\ell^{m_4}T_\ell$.
We get that the natural surjective map $T_\ell\to G_\ell$ factors through
a surjective map $T_\ell/(1-F)T_\ell\to G_\ell$. We conclude that
$T_\ell/(1-F)T_\ell\cong G_\ell$, since orders of both groups coincide.

\end{document}